\newtheorem{theorem}{Theorem}[section]
\newtheorem{thm}[theorem]{\bf{Theorem}}
\newtheorem{cor}[theorem]{\bf{Corollary}}
\newtheorem{lem}[theorem]{\bf{Lemma}}
\newtheorem{prop}[theorem]{\bf{Proposition}}
\newtheorem{remark}[theorem]{\bf{Remark}}
\newtheorem{ex}[theorem]{\bf{Example}}
\numberwithin{equation}{section}
 \newcommand{\A}{\mathcal{A}}
   \newcommand{\B}{\mathcal{B}}
 \newcommand{\C}{\mathcal{C}}
   \newcommand{\T}{\mathcal{T}}
 \newcommand{\M}{\mathcal{M}}
 \newcommand{\N}{\mathcal{N}}
 \newcommand{\Pa}{\mathcal{P}}
\begin{document}
%------------------------------------------------------------------------------------%

%------------------------------------------------------------------------------------%
\title[Derivations on Triangular Banach Algebras of Order Three]{Derivations on Triangular Banach Algebras of Order Three}
\author{Ali Ebadian$^1$}
\address{
 $^1$Department of Mathematics, Payame Noor University  \newline
\indent P.O. BOX 19395-3697, Tehran, Iran}
\email{ebadian.ali@gmail.com}
\author{Madjid Eshaghi Gordji$^2$}
\address{$^2$Department of Mathematics, Semnan University\newline
\indent P. O. Box 35195-363, Semnan, Iran}
\address{$^2$Center of Excellence in Nonlinear Analysis and Applications (CENAA)\newline
\indent Semnan University, Iran}
\email{madjid.eshaghi@gmail.com}
\author{Ali Jabbari$^3$}
\address{
 $^3$Young Researchers Club, Ardabil Branch  \newline
\indent Islamic Azad University,  Ardabil, Iran}
\email{jabbari\underline{ }al@yahoo.com}

 \subjclass[2000]{Primary: 05C50, Secondary: 47B47}

\keywords{Banach algebra, Derivation, Matrix algebra, Triangular algebra}
\maketitle
\begin{abstract}
In this paper, we define some new notions of triangular Banach algebras and we investigate the  derivations on these algebras.
\end{abstract}

%------------------------------------------------------------------------------------%

%------------------------------------------------------------------------------------%

\section{ Introduction}
Let $\A$, $\B$ and $\C$ be  algebras. Consider the triangular matrix of order three
$$\T=\left[
    \begin{array}{ccc}
      \A& \M & \Pa \\
       & \B & \N \\
       &  & \C \\
    \end{array}
  \right],
$$
where $\M$ is left $\A$-module, right $\B$-module, $\N$ is left $\B$-module, right $\C$-module, and $\Pa$ is left $\A$-module, and right $\C$-module. The matrix $\T$ is become an algebra by usual adding and product of $3\times3$ matrix, via
\begin{equation*}
   \left[
  \begin{array}{ccc}
    a_1 & m_1 & p_1 \\
     & b_1 & n_1 \\
     &  & c_1 \\
  \end{array}
\right]+ \left[
  \begin{array}{ccc}
    a_2 & m_2 & p_2 \\
     & b_2 & n_2 \\
     &  & c_2 \\
  \end{array}
\right]= \left[
  \begin{array}{ccc}
    a_1+a_2 & m_1+m_2 & p_1+p_2 \\
     & b_1 +b_2& n_1+n_2 \\
     &  & c_1+c_2 \\
  \end{array}
\right],
\end{equation*}
and
\begin{eqnarray*}
% \nonumber to remove numbering (before each equation)
   \left[
  \begin{array}{ccc}
    a_1 & m_1 & p_1 \\
     & b_1 & n_1 \\
     &  & c_1 \\
  \end{array}
\right] \left[
  \begin{array}{ccc}
    a_2 & m_2 & p_2 \\
     & b_2 & n_2 \\
     &  & c_2 \\
  \end{array}
\right] &&  \\
  & \hspace{-4cm}=&   \hspace{-2cm}\left[
  \begin{array}{ccc}
    a_1a_2 & a_1m_2+m_1b_2 & a_1p_2+\mu(m_1\otimes n_2)+p_1 c_2\\
     & b_1 b_2& b_1n_2+n_1c_2 \\
     &  & c_1c_2 \\
  \end{array}
\right],
\end{eqnarray*}
where $\mu:\M\otimes \N\longrightarrow\Pa$ is a left $\A$-module and right $\C$-module homomorphism. Now, suppose $\A$, $\B$ and $\C$ are Banach algebras, and $\M$ is left Banach $\A$-module, right Banach $\B$-module, $\N$ is left Banach $\B$-module, right Banach $\C$-module, and $\Pa$ is left Banach  $\A$-module, and right Banach $\C$-module. Then the triangular algebra $\T$ is a Banach algebra  by the following norm
\begin{equation*}
    \| \left[
  \begin{array}{ccc}
    a & m & p \\
     & b & n \\
     &  & c \\
  \end{array}
\right]\|=\|a\|_\A+\|m\|_\M+\|p\|_{\Pa}+\|b\|_\B+\|n\|_\N+\|c\|_C.
\end{equation*}

We  identify this algebra  by $\T=\A\oplus_1\M\oplus_1\Pa\oplus_1\B\oplus_1\N\oplus_1\C$.

Recently, some results regarding  homomorphisms and generalized homomorphisms on order three ring matrixes obtained by Xing in \cite{xi}. This paper motivated us that we study the derivations on these rings. Derivations of $2\times2$ triangular matrix algebras  studied by Forrest and Marcoux in \cite{fo1}. In this paper, by using the ideas of  Forrest and Marcoux, we study the derivations on triangular matrix algebras.

\section{Main Results}
We start our work  with the following easy but essential Proposition which play important role in characterizations of derivations on triangular matrix algebras of order three.
\begin{prop}\label{p1}
Let $\A$, $\B$ and $\C$ be unital Banach algebras, and let $D:\T\longrightarrow\T$ be a derivation. Then there exist derivations $D_\A:\A\longrightarrow\A$, $D_\B:\B\longrightarrow\B$, $D_\C:\C\longrightarrow\C$, the linear mappings $\tau_\M:\M\longrightarrow\M$, $\tau_{\Pa}:\Pa\longrightarrow\Pa$, $\tau_\N:\N\longrightarrow\N$, and elements $m_D\in\M$, $p_D\in\Pa$ and $n_D\in\N$ such that the following statements hold:
\begin{enumerate}
  \item $D\Big{(}\left[
  \begin{array}{ccc}
    e_\A & 0 & 0 \\
     & 0 & 0 \\
     &  & 0 \\
  \end{array}
\right]\Big{)}=\left[
  \begin{array}{ccc}
    0 & m_D & p_D \\
     & 0 & 0 \\
     &  & 0 \\
  \end{array}
\right]$.
  \item $D\Big{(}\left[
  \begin{array}{ccc}
    a & 0 & 0 \\
     & 0 & 0 \\
     &  & 0 \\
  \end{array}
\right]\Big{)}=\left[
  \begin{array}{ccc}
    D_\A(a) & am_D & ap_D \\
     & 0 & 0 \\
     &  & 0 \\
  \end{array}
\right]$.
  \item $D\Big{(}\left[
  \begin{array}{ccc}
    0 & 0 & 0 \\
     & e_\B & 0 \\
     &  & 0 \\
  \end{array}
\right]\Big{)}=\left[
  \begin{array}{ccc}
    0 & -m_D & 0 \\
     & 0 & n_D \\
     &  & 0 \\
  \end{array}
\right]$.
  \item $D\Big{(}\left[
  \begin{array}{ccc}
    0 & 0 & 0 \\
     & b & 0 \\
     &  & 0 \\
  \end{array}
\right]\Big{)}=\left[
  \begin{array}{ccc}
    0 & -m_Db & 0 \\
     & D_\B(b) & bn_D \\
     &  & 0 \\
  \end{array}
\right]$.
  \item $D\Big{(}\left[
  \begin{array}{ccc}
    0 & 0 & 0 \\
     & 0 & 0 \\
     &  & e_\C \\
  \end{array}
\right]\Big{)}=\left[
  \begin{array}{ccc}
    0 & 0 & -p_D \\
     & 0 & -n_D \\
     &  & 0 \\
  \end{array}
\right]$.
  \item $D\Big{(}\left[
  \begin{array}{ccc}
    0 & 0 & 0 \\
     & 0 & 0 \\
     &  & c \\
  \end{array}
\right]\Big{)}=\left[
  \begin{array}{ccc}
    0 & 0 & -p_Dc \\
     & 0 & -n_Dc \\
     &  & D_\C(c) \\
  \end{array}
\right]$.
\item $D\Big{(}\left[
  \begin{array}{ccc}
    0 & m & 0 \\
     & 0 & 0 \\
     &  & 0 \\
  \end{array}
\right]\Big{)}=\left[
  \begin{array}{ccc}
    0 & \tau_\M(m) & \mu(m\otimes n_D) \\
     & 0 & 0 \\
     &  & 0 \\
  \end{array}
\right]$.
\item $D\Big{(}\left[
  \begin{array}{ccc}
    0 & 0 & p \\
     & 0 & 0 \\
     &  & 0 \\
  \end{array}
\right]\Big{)}=\left[
  \begin{array}{ccc}
    0 & 0 & \tau_{\Pa}(p) \\
     & 0 & 0 \\
     &  & 0 \\
  \end{array}
\right]$.
\item $D\Big{(}\left[
  \begin{array}{ccc}
    0 & 0 & 0 \\
     & 0 & n \\
     &  & 0\\
  \end{array}
\right]\Big{)}=\left[
  \begin{array}{ccc}
    0 & 0 & \mu(-m_D\otimes n) \\
     & 0 & \tau_\N(n) \\
     &  & 0\\
  \end{array}
\right]$.
\end{enumerate}
\end{prop}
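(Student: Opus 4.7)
The plan is to exploit the three orthogonal diagonal idempotents $E_1,E_2,E_3\in\T$, placed with $e_\A,e_\B,e_\C$ respectively in the $(1,1),(2,2),(3,3)$ positions. They satisfy $E_i^2=E_i$, $E_iE_j=0$ for $i\neq j$, and $E_1+E_2+E_3$ equals the identity of $\T$; so $D(E_1)+D(E_2)+D(E_3)=0$.

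First I would pin down each $D(E_i)$. Writing $D(E_i)$ as a general upper-triangular matrix and applying the Leibniz rule to $E_i^2=E_i$, the equation forces every diagonal block of $D(E_i)$ to vanish as well as every off-diagonal entry that does \emph{not} lie in the $i$-th row or $i$-th column. This already gives item (1), together with the shape of items (3) and (5). Applying Leibniz to $E_1E_2=0$ identifies the $(1,2)$-entry of $D(E_2)$ with $-m_D$, and to $E_2E_3=0$ identifies the $(2,3)$-entry of $D(E_3)$ with $-n_D$, where $n_D$ is the $(2,3)$-entry of $D(E_2)$. Finally $D(E_1)+D(E_2)+D(E_3)=0$ fixes the $(1,3)$-entry of $D(E_3)$ as $-p_D$, completing (3) and (5).

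For a matrix supported on a single diagonal block, I would write it as $E_iXE_i$ and expand by Leibniz twice. Since the surviving entries of $D(E_i)$ are module-valued and off-diagonal, the cross terms $D(E_i)X$ and $X\,D(E_i)$ produce precisely the off-diagonal entries $am_D,\ ap_D$ (for $i=1$), $-m_Db,\ bn_D$ (for $i=2$), and $-p_Dc,\ -n_Dc$ (for $i=3$); the sandwiched middle term remains on the diagonal and defines linear maps $D_\A,D_\B,D_\C$ on the $(i,i)$-block. That each of these is a derivation follows by applying $D$ to a product of two such one-block-supported matrices and reading off the diagonal entry, thereby establishing (2), (4), and (6).

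The module statements (7)--(9) go the same way by writing $M=E_1ME_2$, $P=E_1PE_3$, $N=E_2NE_3$ and expanding by Leibniz. The crucial input is the multiplication formula of $\T$: the $(1,3)$-entry of $M\cdot D(E_2)$ equals $\mu(m\otimes n_D)$ and the $(1,3)$-entry of $D(E_2)\cdot N$ equals $\mu(-m_D\otimes n)$, which is exactly where the homomorphism $\mu$ enters. The residual $(1,2),(1,3),(2,3)$-entries then define $\tau_\M,\tau_\Pa,\tau_\N$, with linearity inherited from $D$. The main obstacle is purely bookkeeping, namely tracking the $\mu$-twists that arise whenever two off-diagonal pieces collide in position $(1,3)$; beyond that, the argument is a direct three-row generalisation of the Forrest--Marcoux computation for $2\times 2$ triangular algebras.
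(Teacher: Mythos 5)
Your proposal is correct and follows essentially the same route as the paper: exploit the orthogonal idempotents $E_1,E_2,E_3$, apply the Leibniz rule to $E_i^2=E_i$, $E_iE_j=0$ and to the decompositions $X=E_iXE_j$ of one-block-supported elements, and read off the entries (your computations of the $(1,3)$-entries of $M\cdot D(E_2)$ and $D(E_2)\cdot N$ match the paper's items (7) and (9) exactly). The only difference is organizational: you treat all nine cases uniformly via the sandwich $E_iXE_j$ and use $D(E_1)+D(E_2)+D(E_3)=0$ for one entry, whereas the paper writes out only (1), (2), (7) via one-sided multiplications and declares the rest similar.
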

\begin{proof}
We prove just cases (1), (2) and (7), and  other cases have similar proof. Let $D\Big{(}\left[
  \begin{array}{ccc}
    e_\A & 0 & 0 \\
     & 0 & 0 \\
     &  & 0 \\
  \end{array}
\right]\Big{)}=\left[
  \begin{array}{ccc}
    \alpha & m' & p' \\
     &\beta & n' \\
     &  & \gamma \\
  \end{array}
\right]$. Since $D$ is a derivation, then we have
\begin{eqnarray*}
% \nonumber to remove numbering (before each equation)
  D\Big{(}\left[
  \begin{array}{ccc}
    e_\A & 0 & 0 \\
     & 0 & 0 \\
     &  & 0 \\
  \end{array}
\right]\Big{)} &=& \left[
  \begin{array}{ccc}
    e_\A & 0 & 0 \\
     & 0 & 0 \\
     &  & 0 \\
  \end{array}
\right]\left[
  \begin{array}{ccc}
    \alpha & m' & p' \\
     &\beta & n' \\
     &  & \gamma \\
  \end{array}
\right]+\left[
  \begin{array}{ccc}
    \alpha & m' & p' \\
     &\beta & n' \\
     &  & \gamma \\
  \end{array}
\right] \left[
  \begin{array}{ccc}
    e_\A & 0 & 0 \\
     & 0 & 0 \\
     &  & 0 \\
  \end{array}
\right]\\
   &=& \left[
  \begin{array}{ccc}
    0& m' & p' \\
     &0& 0 \\
     &  & 0 \\
  \end{array}
\right]=\left[
  \begin{array}{ccc}
    0 & m_D & p_D \\
     & 0 & 0 \\
     &  & 0 \\
  \end{array}
\right].
\end{eqnarray*}

For (2), existence of $D_\A$ by easy calculation is clear, assume that $D\Big{(}\left[
  \begin{array}{ccc}
    a & 0 & 0 \\
     & 0 & 0 \\
     &  & 0 \\
  \end{array}
\right]\Big{)}=\left[
  \begin{array}{ccc}
    D_\A(a) & m' & p' \\
     &\beta & n' \\
     &  & \gamma \\
  \end{array}
\right]$. Then by (1) we have
\begin{eqnarray*}
% \nonumber to remove numbering (before each equation)
  D\Big{(}\left[
  \begin{array}{ccc}
    a & 0 & 0 \\
     & 0 & 0 \\
     &  & 0 \\
  \end{array}
\right]\Big{)} &=&\left[
  \begin{array}{ccc}
    a & 0 & 0 \\
     & 0 & 0 \\
     &  & 0 \\
  \end{array}
\right] \left[
  \begin{array}{ccc}
    0 & m_D & p_D \\
     &0 & 0 \\
     &  & 0 \\
  \end{array}
\right] +\left[
  \begin{array}{ccc}
    D_\A(a) & m' & p' \\
     &\beta & n' \\
     &  & \gamma \\
  \end{array}
\right]\left[
  \begin{array}{ccc}
    e_\A & 0 & 0 \\
     & 0 & 0 \\
     &  & 0 \\
  \end{array}
\right]\\
   &=&\left[
  \begin{array}{ccc}
    D_\A(a) & am_D & ap_D \\
     & 0 & 0 \\
     &  & 0 \\
  \end{array}
\right].
\end{eqnarray*}

(7) Suppose that $D\Big{(}\left[
  \begin{array}{ccc}
    0 & m & 0 \\
     & 0 & 0 \\
     &  & 0 \\
  \end{array}
\right]\Big{)}=\left[
  \begin{array}{ccc}
    \alpha & \tau_\M(m) & p' \\
     & \beta & n' \\
     &  & \gamma \\
  \end{array}
\right]$. Then (1) and (3) imply
\begin{eqnarray}\label{1}
\nonumber
  D\Big{(}\left[
  \begin{array}{ccc}
    0 & m & 0 \\
     & 0 & 0 \\
     &  & 0 \\
  \end{array}
\right]\Big{)} &=&\left[
  \begin{array}{ccc}
    e_\A & 0 & 0 \\
     & 0 & 0 \\
     &  & 0 \\
  \end{array}
\right] \left[
  \begin{array}{ccc}
    \alpha & \tau_\M(m) & p' \\
     & \beta & n' \\
     &  & \gamma \\
  \end{array}
\right] +\left[
  \begin{array}{ccc}
    0 & m_D & p_D \\
     &0 & 0 \\
     &  & 0 \\
  \end{array}
\right] \left[
  \begin{array}{ccc}
    0 & m & 0 \\
     & 0 & 0 \\
     &  & 0 \\
  \end{array}
\right]\\
   &=&\left[
  \begin{array}{ccc}
    \alpha & \tau_\M(m) & p' \\
     & 0& 0 \\
     &  & 0 \\
  \end{array}
\right],
\end{eqnarray}
and
\begin{eqnarray}\label{2}
\nonumber
  D\Big{(}\left[
  \begin{array}{ccc}
    0 & m & 0 \\
     & 0 & 0 \\
     &  & 0 \\
  \end{array}
\right]\Big{)} &=& \left[
  \begin{array}{ccc}
    0 & m & 0 \\
     & 0 & 0 \\
     &  & 0 \\
  \end{array}
\right]\left[
  \begin{array}{ccc}
    0 & -m_D & \\
     &0 & n_D \\
     &  & 0 \\
  \end{array}
\right]+ \left[
  \begin{array}{ccc}
    \alpha & \tau_\M(m) & p' \\
     & \beta & n' \\
     &  & \gamma \\
  \end{array}
\right]\left[
  \begin{array}{ccc}
    0 & 0 & 0 \\
     & e_\B & 0 \\
     &  & 0 \\
  \end{array}
\right] \\
   &=&\left[
  \begin{array}{ccc}
    0 & \tau_\M(m) & \mu(m\otimes n_D) \\
     & 0& 0 \\
     &  & 0 \\
  \end{array}
\right],
\end{eqnarray}
thus, by (\ref{1}) and (\ref{2}), we desert  the result.
\end{proof}

By collecting of obtained results in Proposition \ref{p1}, we have the following.
\begin{cor}\label{c1}
Let $D:\T\longrightarrow\T$ be a derivation, then there exist derivations $D_\A:\A\longrightarrow\A$, $D_\B:\B\longrightarrow\B$, $D_\C:\C\longrightarrow\C$, the linear mappings $\tau_\M:\M\longrightarrow\M$, $\tau_{\Pa}:\Pa\longrightarrow\Pa$, $\tau_\N:\N\longrightarrow\N$, and elements $m_D\in\M$, $p_D\in\Pa$ and $n_D\in\N$ such that
\begin{equation*}
  D\Big{(}\left[
  \begin{array}{ccc}
   a & m & p \\
     & b & n \\
     &  & c \\
  \end{array}
\right]\Big{)}=\left[
  \begin{array}{ccc}
    D_\A(a) & am_D-m_Db+\tau_\M(m) & ap_D-p_Dc+\mu(-m_D\otimes n)\\
    &&+\mu(m\otimes n_D)+\tau_{\Pa}(p) \\
     & D_\B(b) &bn_D-n_Dc+\tau_\N(n) \\
     &  & D_\C(c) \\
  \end{array}
\right],
\end{equation*}
for any $\left[
  \begin{array}{ccc}
   a & m & p \\
     & b & n \\
     &  & c \\
  \end{array}
\right]\in\T$.
\end{cor}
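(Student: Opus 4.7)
The proof will be a direct assembly of the cases established in Proposition \ref{p1}. The plan is to decompose a general element of $\T$ as a sum of six elementary matrices, apply linearity of $D$, and then collect like entries.

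First, I would write an arbitrary matrix in $\T$ as the sum of six matrices, each having a single nonzero entry among the positions $(1,1)$, $(1,2)$, $(1,3)$, $(2,2)$, $(2,3)$, $(3,3)$. The images of these six summands under $D$ are computed respectively in cases $(2)$, $(7)$, $(8)$, $(4)$, $(9)$, $(6)$ of Proposition \ref{p1}, so by linearity the desired image is exactly their sum.

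Second, I would add these six matrices entry by entry. The diagonal entries receive a single contribution each, yielding $D_\A(a)$, $D_\B(b)$, $D_\C(c)$. The $(1,2)$ entry collects $am_D$ from case $(2)$, $-m_D b$ from case $(4)$, and $\tau_\M(m)$ from case $(7)$; similarly $(2,3)$ collects $bn_D$, $-n_D c$, and $\tau_\N(n)$ from cases $(4)$, $(6)$, $(9)$. The only nontrivial accumulation occurs at position $(1,3)$, where $ap_D$, $-p_D c$, $\mu(m\otimes n_D)$, $\tau_{\Pa}(p)$, and $\mu(-m_D\otimes n)$ contribute from cases $(2)$, $(6)$, $(7)$, $(8)$, $(9)$, respectively, matching the formula in the statement.

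There is no substantive obstacle: the corollary is a straightforward bookkeeping consolidation. The only point worth emphasizing is that the auxiliary data $D_\A, D_\B, D_\C, \tau_\M, \tau_\N, \tau_{\Pa}, m_D, n_D, p_D$ can be taken the same across all nine cases, since they were all extracted from the single derivation $D$ fixed at the outset of the proof of Proposition \ref{p1}; hence no compatibility issues arise when the nine formulas are combined.
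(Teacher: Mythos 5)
Your proof is correct and is exactly the argument the paper intends: the paper offers no explicit proof beyond the phrase ``by collecting the obtained results in Proposition \ref{p1},'' and your decomposition into six elementary matrices, application of linearity, and entrywise bookkeeping (with the correct attribution of each term to cases (2), (4), (6), (7), (8), (9)) is that collection carried out in full. Your closing observation that the auxiliary data are all extracted from the single fixed derivation $D$, so no compatibility issue arises, is the one point worth making explicit, and you made it.
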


We introduced the mappings $\tau_\M$, $\tau_{\Pa}$ and $\tau_\N$ on $\M$, $\Pa$ and $\N$, respectively. Now, we have the following lemma.
\begin{lem}\label{l1}
Let $D:\T\longrightarrow\T$ be a derivation, then  the following statements hold
\begin{enumerate}
  \item $\tau_\M(am)=D_\A(a)m+a\tau_\M(m)$.
  \item $\tau_\M(mb)=mD_\B(b)+\tau_\M(m)b$.
  \item $\tau_{\Pa}(ap)=D_\A(a)p+a\tau_{\Pa}(p)$.
  \item $\tau_{\Pa}(pc)=pD_\C(c)+\tau_{\Pa}(p)c$.
  \item $\tau_\N(bn)=D_\B(b)n+b\tau_\N(n)$.
  \item $\tau_\N(nc)=nD_\C(c)+\tau_\N(n)c$.
\end{enumerate}
\end{lem}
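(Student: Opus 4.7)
The plan is to prove each of the six identities by applying $D$ to a carefully chosen product of two elementary matrices (matrices whose only nonzero entry is in a single slot) and then invoking Proposition \ref{p1} on both sides of the Leibniz identity. In every case the product of the two chosen matrices is again an elementary matrix whose single nonzero entry is exactly the element ($am$, $mb$, $ap$, $pc$, $bn$, or $nc$) whose $\tau$-value we want to compute, so the right side of the Leibniz identity collapses to a single application of Proposition \ref{p1}, while the left side expands into a sum of two matrix products whose relevant slot gives precisely the stated expression.

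As a representative calculation for (1), I would consider
\[
\left[\begin{array}{ccc} a & 0 & 0 \\ & 0 & 0 \\ & & 0 \end{array}\right]\left[\begin{array}{ccc} 0 & m & 0 \\ & 0 & 0 \\ & & 0 \end{array}\right] = \left[\begin{array}{ccc} 0 & am & 0 \\ & 0 & 0 \\ & & 0 \end{array}\right].
\]
By Proposition \ref{p1}(7), the image of the right side under $D$ has $(1,2)$-entry $\tau_\M(am)$. By the Leibniz rule together with Proposition \ref{p1}(2) and (7), the image of the left side has $(1,2)$-entry $D_\A(a)m + a\tau_\M(m)$; the auxiliary terms $am_D$ and $ap_D$ appearing in $D$ of the first factor hit only zero entries of the second factor and therefore drop out. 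Equating the two expressions yields (1).

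The other five identities follow from the same template, using the products ``$m$ in $(1,2)$ times $b$ in $(2,2)$'' for (2), ``$a$ in $(1,1)$ times $p$ in $(1,3)$'' for (3), ``$p$ in $(1,3)$ times $c$ in $(3,3)$'' for (4), ``$b$ in $(2,2)$ times $n$ in $(2,3)$'' for (5), and ``$n$ in $(2,3)$ times $c$ in $(3,3)$'' for (6). The one hazard to monitor is the $(1,3)$-slot of the product matrices: the auxiliary elements $m_D$, $p_D$, $n_D$ contribute there, and one must check that the resulting equalities are consistent. This is immediate from the $\A$- and $\C$-module homomorphism property of $\mu$, and produces no constraints beyond the six stated identities. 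This bookkeeping is the main obstacle, but it is entirely routine once Proposition \ref{p1} is in hand.
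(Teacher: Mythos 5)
Your proposal is correct and follows essentially the same route as the paper: apply $D$ to the product of the two relevant elementary matrices, invoke the Leibniz rule together with Proposition \ref{p1} on each factor, and compare the appropriate entry. Your additional remark that the $(1,3)$-slot contributions are consistent via the module-homomorphism property of $\mu$ is accurate (e.g.\ $a\mu(m\otimes n_D)=\mu(am\otimes n_D)$ in case (1)) and is a point the paper passes over silently.
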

\begin{proof}
We only prove (1), other cases are similar. By Proposition \ref{p1} and Corollary \ref{c1}, we have
\begin{eqnarray*}
% \nonumber to remove numbering (before each equation)
   D\Big{(}\left[
  \begin{array}{ccc}
   0 & am & 0\\
     &  0& 0 \\
     &  & 0 \\
  \end{array}
\right]\Big{)} &=&D\Big{(}\left[
  \begin{array}{ccc}
   a & 0 & 0\\
     &  0& 0 \\
     &  & 0 \\
  \end{array}
\right]\left[
  \begin{array}{ccc}
   0 & m & 0\\
     &  0& 0 \\
     &  & 0 \\
  \end{array}
\right]\Big{)} =\left[
  \begin{array}{ccc}
   a & 0 & 0\\
     &  0& 0 \\
     &  & 0 \\
  \end{array}
\right] \left[
  \begin{array}{ccc}
   0 & \tau_\M(m) & 0\\
     &  0& 0 \\
     &  & 0 \\
  \end{array}
\right] \\
   && +\left[
  \begin{array}{ccc}
    D_\A(a) & am_D & ap_D \\
     & 0 & 0 \\
     &  & 0 \\
  \end{array}
\right]\left[
  \begin{array}{ccc}
   0 & m & 0\\
     &  0& 0 \\
     &  & 0 \\
  \end{array}
\right]\\
   &=&  \left[
  \begin{array}{ccc}
   0 &  D_\A(a)m+a\tau_\M(m) & 0 \\
     & 0 & 0 \\
     &  & 0 \\
  \end{array}
\right]=  \left[
  \begin{array}{ccc}
   0 &  \tau_\M(am) & 0 \\
     & 0 & 0 \\
     &  & 0 \\
  \end{array}
\right].
\end{eqnarray*}
\end{proof}

In the following theorem, we do not need to assume that $\A$, $\B$ and $\C$ are unital, and our proof is algebraic.
\begin{thm}\label{t1}
Let  $D_\A:\A\longrightarrow\A$, $D_\B:\B\longrightarrow\B$, and $D_\C:\C\longrightarrow\C$, be continuous derivations and  $\tau_\M:\M\longrightarrow\M$, $\tau_{\Pa}:\Pa\longrightarrow\Pa$, and $\tau_\N:\N\longrightarrow\N$ be continuous linear mappings. If the linear mappings $\tau_\M, \tau_{\Pa}$, and $\tau_\N$ satisfy in cases (1)-(6) of the Lemma \ref{l1},   and
$$\tau_{\Pa}(\mu(m\otimes n))=\mu(\tau_\M(m)\otimes n)+\mu(m\otimes \tau_\N(n)),$$
then $D:\T\longrightarrow\T$ defined by
\begin{equation*}
  D\Big{(}\left[
  \begin{array}{ccc}
   a & m & p \\
     & b & n \\
     &  & c \\
  \end{array}
\right]\Big{)}=\left[
  \begin{array}{ccc}
    D_\A(a) & \tau_\M(m) & \tau_{\Pa}(p) \\
     & D_\B(b) &\tau_\N(n) \\
     &  & D_\C(c) \\
  \end{array}
\right],
\end{equation*}
for any $\left[
  \begin{array}{ccc}
   a & m & p \\
     & b & n \\
     &  & c \\
  \end{array}
\right]\in\T$, is a continuous derivation.
\end{thm}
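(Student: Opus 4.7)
My plan is a direct verification that the prescribed $D$ is a continuous derivation, proceeding in three stages: linearity, continuity, and the Leibniz rule entry by entry.

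Linearity of $D$ is immediate because each of $D_\A, D_\B, D_\C, \tau_\M, \tau_\Pa, \tau_\N$ is linear and $D$ is defined coordinatewise on a direct sum. Continuity follows at once from the $\ell^1$-norm on $\T=\A\oplus_1\M\oplus_1\Pa\oplus_1\B\oplus_1\N\oplus_1\C$: if $K$ is the maximum of the operator norms of the six given maps, then $\|D(X)\|\le K\|X\|$ for every $X\in\T$. So the heart of the argument is the multiplicative property.

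Fix two elements
\[
X=\begin{bmatrix} a_1 & m_1 & p_1 \\ & b_1 & n_1 \\ & & c_1 \end{bmatrix},\qquad
Y=\begin{bmatrix} a_2 & m_2 & p_2 \\ & b_2 & n_2 \\ & & c_2 \end{bmatrix}.
\]
From the formula for matrix multiplication in $\T$ given in the introduction, $D(XY)$, $D(X)Y$ and $XD(Y)$ are all upper-triangular with the six entries in positions $(1,1),(1,2),(1,3),(2,2),(2,3),(3,3)$. I will check the identity $D(XY)=D(X)Y+XD(Y)$ in each of these six positions:

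\begin{itemize}
\item[$(1,1)$] $D_\A(a_1a_2)=D_\A(a_1)a_2+a_1D_\A(a_2)$, which holds because $D_\A$ is a derivation. The analogous checks at $(2,2)$ and $(3,3)$ use that $D_\B,D_\C$ are derivations.
\item[$(1,2)$] We need $\tau_\M(a_1m_2+m_1b_2)=D_\A(a_1)m_2+a_1\tau_\M(m_2)+\tau_\M(m_1)b_2+m_1D_\B(b_2)$, which is exactly the sum of items (1) and (2) of Lemma~\ref{l1}.
\item[$(2,3)$] We need $\tau_\N(b_1n_2+n_1c_2)=D_\B(b_1)n_2+b_1\tau_\N(n_2)+\tau_\N(n_1)c_2+n_1D_\C(c_2)$, which is the sum of items (5) and (6).
\item[$(1,3)$] This is the main obstacle. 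We must show
\[
\tau_\Pa\bigl(a_1p_2+\mu(m_1\otimes n_2)+p_1c_2\bigr)=D_\A(a_1)p_2+a_1\tau_\Pa(p_2)+\mu(\tau_\M(m_1)\otimes n_2)+\mu(m_1\otimes\tau_\N(n_2))+\tau_\Pa(p_1)c_2+p_1D_\C(c_2).
\]
By linearity of $\tau_\Pa$ we split the left-hand side into three pieces. The terms $\tau_\Pa(a_1p_2)$ and $\tau_\Pa(p_1c_2)$ match the corresponding $\A$- and $\C$-terms on the right by items (3) and (4) of Lemma~\ref{l1}, while the cross term $\tau_\Pa(\mu(m_1\otimes n_2))$ matches the two $\mu$-terms by the hypothesis $\tau_\Pa(\mu(m\otimes n))=\mu(\tau_\M(m)\otimes n)+\mu(m\otimes\tau_\N(n))$.
\end{itemize}

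Combining the six coordinate identities yields $D(XY)=D(X)Y+XD(Y)$, finishing the proof. The only place where a genuinely new ingredient beyond Lemma~\ref{l1} enters is the $(1,3)$ entry, precisely because that is the only position where multiplication in $\T$ uses the bimodule pairing $\mu$; this is why the extra compatibility hypothesis on $\tau_\Pa$, $\tau_\M$, $\tau_\N$ and $\mu$ is both needed and sufficient.
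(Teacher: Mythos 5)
Your proof is correct and follows essentially the same route as the paper: compute both sides of the Leibniz identity in each of the six matrix positions and match them using items (1)--(6) of Lemma~\ref{l1} together with the compatibility hypothesis relating $\tau_\Pa$, $\tau_\M$, $\tau_\N$ and $\mu$ in the $(1,3)$ entry. If anything, your write-up is more explicit than the paper's, which displays the two sides and leaves the entrywise identification to the reader.
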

\begin{proof}
Continuity of $D$ by it's definition is clear.
\begin{eqnarray*}
% \nonumber to remove numbering (before each equation)
D\Big{(}\left[
  \begin{array}{ccc}
    a_1 & m_1 & p_1 \\
     & b_1 & n_1 \\
     &  & c_1 \\
  \end{array}
\right] \left[
  \begin{array}{ccc}
    a_2 & m_2 & p_2 \\
     & b_2 & n_2 \\
     &  & c_2 \\
  \end{array}
\right]\Big{)} &&   \\
   &\hspace{-8cm}=&\hspace{-4cm} D\Big{(}\left[
  \begin{array}{ccc}
    a_1a_2 & a_1m_2+m_1b_2 & a_1p_2+\mu(m_1\otimes n_2)+p_1 c_2\\
     & b_1 b_2& b_1n_2+n_1c_2 \\
     &  & c_1c_2 \\
  \end{array}
\right] \Big{)}\\
    &\hspace{-8cm}=&\hspace{-4cm} \left[
  \begin{array}{ccc}
    D_\A(a_1a_2) & \tau_\M( a_1m_2+m_1b_2) & \tau_{\Pa}(a_1p_2+\mu(m_1\otimes n_2)+p_1 c_2)\\
     & D_\B(b_1 b_2)& \tau_\N(b_1n_2+n_1c_2) \\
     &  & D_\C(c_1c_2) \\
  \end{array}
\right].
\end{eqnarray*}

Conversely,
\begin{eqnarray*}
% \nonumber to remove numbering (before each equation)
   &&  \left[
  \begin{array}{ccc}
    a_1 & m_1 & p_1 \\
     & b_1 & n_1 \\
     &  & c_1 \\
  \end{array}
\right]\left[
  \begin{array}{ccc}
    D_\A(a_2) & \tau_\M(m_2) & \tau_{\Pa}(p_2) \\
     & D_\B(b_2) &\tau_\N(n_2) \\
     &  & D_\C(c_2) \\
  \end{array}
\right]\\
   && \hspace{-0.35cm}+\left[
  \begin{array}{ccc}
    D_\A(a_1) & \tau_\M(m_1) & \tau_{\Pa}(p_1) \\
     & D_\B(b_1) &\tau_\N(n_1) \\
     &  & D_\C(c_1) \\
  \end{array}
\right]\left[
  \begin{array}{ccc}
    a_2 & m_2 & p_2 \\
     & b_2 & n_2 \\
     &  & c_2 \\
  \end{array}
\right] \\
   &=& \left[
  \begin{array}{ccc}
   a_1 D_\A(a_2) & a_1\tau_\M(m_2)+m_1D_\B(b_2) & a_1\tau_{\Pa}(p_2)+\mu(m_1\otimes \tau_\N(n_2))+p_1D_\C(c_2) \\
     & b_1D_\B(b_2) &b_1\tau_\N(n_2)+n_1D_\C(c_2) \\
     &  & c_1D_\C(c_2) \\
  \end{array}
\right] \\
   && \hspace{-0.35cm}+\left[
  \begin{array}{ccc}
    D_\A(a_1)a_2 & D_\A(a_1)m_2+\tau_\M(m_1)b_2 & D_\A(a_1)p_2+\mu(\tau_\M(m_1)\otimes n_2)+\tau_{\Pa}(p_1)c_2 \\
     & D_\B(b_1)b_2 &D_\B(b_1)n_2+\tau_\N(n_1)c_2 \\
     &  &D_\C(c_1) c_2\\
  \end{array}
\right].
\end{eqnarray*}

Therefore $D$ is a derivation.
\end{proof}

Note that if $\mu(\M\otimes N)=0$, then the mapping $D:\T\longrightarrow\T$  defined in the above theorem is a derivation.

We denote the space of all continuous left $\A$-module morphisms and right $\B$-module morphisms on $\M$ by $\emph{\emph{Hom}}_{\A,\B}(\M)$, if $\A=\B$, we write $\emph{\emph{Hom}}_{\A}(\M)$. Similarly we define $\emph{\emph{Hom}}_{\A,\C}(\Pa)$ and $\emph{\emph{Hom}}_{\B,\C}(\N)$.

Consider the continuous mappings $\tau_\M:\M\longrightarrow\M$, $\tau_{\Pa}:\Pa\longrightarrow\Pa$ and $\tau_\N:\N\longrightarrow\N$. We say these maps are generalized Rosenblum operators, if there exist derivations $D_\A:\A\longrightarrow\A$, $D_\B:\B\longrightarrow\B$ and $D_\C:\C\longrightarrow\C$ such that
\begin{equation*}
    \tau_\M(amb)=D_\A(a)mb+a\tau_\M(m)b+amD_\B(b),
\end{equation*}
\begin{equation*}
    \tau_{\Pa}(apc)=D_\A(a)pc+a\tau_{\Pa}(p)c+apD_\C(c),
\end{equation*}
and
\begin{equation*}
    \tau_\N(bnc)=D_\B(b)nc+b\tau_\N(n)c+bnD_\C(c),
\end{equation*}
for every $a\in\A, b\in\B, c\in\C, m\in\M, p\in\Pa$ and $c\in\C$. We denote the generalized Rosenblum operator on $\M$ specified by $x\in\A$ and $y\in\B$, by $\tau_\M^{x,y}$, and similarly, we denote the generalized Rosenblum operators on $\Pa$ and $\N$ specified by $x\in\A$, $y\in\B$, and $z\in\C$ by $\tau_{\Pa}^{x,z}$ and $\tau_\N^{y,z}$, respectively.

By $\mathcal{Z}(\A),\mathcal{Z}(\B)$ and $\mathcal{Z}(\C)$, we mean the center of the Banach algebras $\A$, $\B$ and $\C$, respectively. Let $x\in\mathcal{Z}(\A),y\in\mathcal{Z}(\B)$ and $z\in\mathcal{Z}(\C)$, the operators $\tau_\M^{x,y}$, $\tau_{\Pa}^{x,z}$ and $\tau_\N^{y,z}$ called central Rosenblum operators on $\M$, $\Pa$ and $\N$. These operators defined as follows
\begin{equation*}
    \tau_\M^{x,y}(m)=my-xm,~\tau_{\Pa}^{x,z}(p)=pz-xp,~\tau_{\N}^{y,z}(n)=nz-yn.
\end{equation*}

The space of all central Rosenblum operators on $\M$, $\Pa$ and $\N$ denoted by $\mathcal{Z}\mathcal{R}_{\A,\B}(\M)$, $\mathcal{ZR}_{\A,\C}(\Pa)$ and $\mathcal{ZR}_{\B,\C}(\N)$, respectively.

\begin{lem}\label{l2}
Let $\T$ be a triangular Banach algebra of order three defined as above. Then
\begin{itemize}
  \item[(i)]  $\mathcal{Z}\mathcal{R}_{\A,\B}(\M)\subseteq{\emph{Hom}}_{\A,\B}(\M)$.
  \item[(ii)] $\mathcal{ZR}_{\A,\C}(\Pa)\subseteq{\emph{Hom}}_{\A,\C}(\Pa)$.
  \item[(iii)] $\mathcal{ZR}_{\B,\C}(\N)\subseteq {\emph{Hom}}_{\B,\C}(\N)$.
\end{itemize}
\end{lem}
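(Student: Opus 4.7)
The plan is direct: unfold the definition of a central Rosenblum operator and verify the module-homomorphism axioms using only (a) centrality of the specifying element and (b) bimodule associativity of the two module actions on $\M$ (respectively $\Pa$, $\N$). The three cases are structurally identical, so I would write out (i) in full and then remark that (ii) and (iii) follow by replacing $\M$ with $\Pa$ or $\N$ and using the appropriate pair of central elements.

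For (i), fix $x\in\mathcal{Z}(\A)$ and $y\in\mathcal{Z}(\B)$, and consider $\tau_\M^{x,y}(m)=my-xm$. Linearity is immediate from bilinearity of the module actions, and continuity is clear because each of the maps $m\mapsto xm$ and $m\mapsto my$ is continuous on the Banach bimodule $\M$. The substance of the proof is the pair of identities
$$\tau_\M^{x,y}(am)=a\,\tau_\M^{x,y}(m),\qquad \tau_\M^{x,y}(mb)=\tau_\M^{x,y}(m)\,b,$$
valid for all $a\in\A$, $b\in\B$, $m\in\M$.

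To check the left-module identity I would expand $\tau_\M^{x,y}(am)=(am)y-x(am)$. The first term equals $a(my)$ by the bimodule compatibility between the $\A$- and $\B$-actions on $\M$; for the second I would write $x(am)=(xa)m=(ax)m=a(xm)$, where the middle equality is exactly the hypothesis $x\in\mathcal{Z}(\A)$. Subtracting yields $a(my-xm)=a\,\tau_\M^{x,y}(m)$. The right-module identity is symmetric: $(mb)y=m(by)=m(yb)=(my)b$ by centrality of $y\in\mathcal{Z}(\B)$, while $x(mb)=(xm)b$ by bimodule associativity, so the difference collapses to $\tau_\M^{x,y}(m)\,b$.

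I do not foresee any real obstacle; the argument is essentially bookkeeping. The only point demanding care is that every time an algebra element is slid past a module element, the move must be justified either by the bimodule compatibility axiom or by centrality, never by an unassumed commutativity. Parts (ii) and (iii) are then handled by the same two-line calculation, using $x\in\mathcal{Z}(\A)$ and $z\in\mathcal{Z}(\C)$ on $\Pa$, and $y\in\mathcal{Z}(\B)$ and $z\in\mathcal{Z}(\C)$ on $\N$.
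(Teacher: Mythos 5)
Your argument is correct and is exactly the standard verification that the paper itself invokes by citing Lemma 2.6 of Forrest--Marcoux rather than writing it out: the identities $x(am)=(xa)m=(ax)m=a(xm)$ and $(mb)y=m(by)=m(yb)=(my)b$, justified by centrality and the bimodule axioms, are the whole content. Nothing is missing; your write-up simply makes explicit what the paper delegates to the reference.
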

\begin{proof}
It follows by a same reasoning as   proof of Lemma 2.6 of \cite{fo1}.

\end{proof}
\begin{lem}\label{l3}
Let $\varphi\in{\emph{Hom}}_{\A,\B}(\M)$, $\theta\in{\emph{Hom}}_{\A,\C}(\Pa)$ and $\psi\in{\emph{Hom}}_{\B,\C}(\N)$ such that
\begin{equation}\label{e1}
    \theta(\mu(m\otimes n))=\mu(\varphi(m)\otimes n)+\mu(m\otimes\psi(n)).
\end{equation}

Then $D_{\varphi,\theta,\psi}:\T\longrightarrow\T$ defined by $D_{\varphi,\theta,\psi}\Big{(}\left[
  \begin{array}{ccc}
   a & m & p \\
     & b & n \\
     &  & c \\
  \end{array}
\right]\Big{)}=\left[
  \begin{array}{ccc}
    0 & \varphi(m) & \theta(p) \\
     & 0 &\psi(n) \\
     &  & 0 \\
  \end{array}
\right]$ is a continuous derivation. Moreover, $D_{\varphi,\theta,\psi}$ is an  inner derivation if and only if there exist $x\in\A$, $y\in\B$ and $z\in C$ such that $\varphi=\tau_\M^{x,y}\in\mathcal{Z}\mathcal{R}_{\A,\B}(\M)$, $\theta=\tau_{\Pa}^{x,z}\in\mathcal{ZR}_{\A,\C}(\Pa)$ and $\psi=\tau_\N^{y,z}\in\mathcal{ZR}_{\B,\C}(\N)$.
\end{lem}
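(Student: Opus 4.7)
The plan is to separate the two assertions and handle each by direct computation. For the first claim --- that $D_{\varphi,\theta,\psi}$ is a continuous derivation --- I would invoke Theorem~\ref{t1} with $D_\A$, $D_\B$, $D_\C$ all taken to be zero and $\tau_\M = \varphi$, $\tau_{\Pa} = \theta$, $\tau_\N = \psi$. Under this specialization the six identities of Lemma~\ref{l1} collapse to the statements that $\varphi, \theta, \psi$ are left and right module maps, which is precisely the content of $\varphi \in \text{Hom}_{\A,\B}(\M)$, $\theta \in \text{Hom}_{\A,\C}(\Pa)$, $\psi \in \text{Hom}_{\B,\C}(\N)$; and the remaining tensor compatibility required by Theorem~\ref{t1} is exactly hypothesis~(\ref{e1}). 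Continuity is immediate from the continuity of the three components.

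For the characterization of innerness, both directions can be obtained by comparing entries against the product formula of $\T$. In the easy direction, given central elements $x \in \mathcal{Z}(\A)$, $y \in \mathcal{Z}(\B)$, $z \in \mathcal{Z}(\C)$ that realize $\varphi, \theta, \psi$ as the corresponding central Rosenblum operators, take $T$ to be the diagonal element of $\T$ with entries $x, y, z$ and all module components zero. The commutator $XT - TX$ then has diagonal entries $ax - xa$, $by - yb$, $cz - zc$, all of which vanish by centrality, and off-diagonal entries $my - xm$, $nz - yn$, $pz - xp$, matching $\varphi(m)$, $\psi(n)$, $\theta(p)$ by the definition of the central Rosenblum operators; hence $D_{\varphi,\theta,\psi}$ is the inner derivation implemented by $T$.

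For the converse, I assume $D_{\varphi,\theta,\psi}(X) = XT - TX$ for some fixed $T = \begin{bmatrix} x & m_0 & p_0 \\ & y & n_0 \\ & & z \end{bmatrix} \in \T$ and compare entries for an arbitrary $X$. Vanishing of the diagonal entries of $XT - TX$ for every choice of $a, b, c$ yields $x \in \mathcal{Z}(\A)$, $y \in \mathcal{Z}(\B)$, $z \in \mathcal{Z}(\C)$. Then specializing $X$ to matrices supported only in the $(1,2)$, $(2,3)$, or $(1,3)$ slot kills the cross terms involving $m_0, n_0, p_0$ and isolates the identities $\varphi(m) = my - xm$, $\psi(n) = nz - yn$, and $\theta(p) = pz - xp$, which are exactly the central Rosenblum forms. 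The main bookkeeping issue will lie in the $(1,3)$ slot, whose commutator also involves $\mu(m \otimes n_0)$ and $\mu(m_0 \otimes n)$; but these two tensor terms disappear once $m$ and $n$ are set to zero, so no further analysis of the module parts $m_0, n_0, p_0$ of $T$ is required in order to obtain the desired triple $(x,y,z)$.
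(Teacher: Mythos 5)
Your proposal is correct and follows essentially the same route as the paper: the derivation property via Theorem~\ref{t1} with the zero derivations, the converse of innerness via the diagonal implementing element $\left[\begin{smallmatrix} x & 0 & 0 \\ & y & 0 \\ & & z\end{smallmatrix}\right]$, and the forward direction by comparing entries of $XT-TX$. Your device of specializing $X$ to a single off-diagonal slot to kill the terms involving $m_0$, $n_0$, $p_0$ is a slightly cleaner way of doing what the paper does when it argues that $a\alpha-\alpha b$ and the $\mu$-cross terms must vanish because $\varphi$, $\theta$, $\psi$ depend only on $m$, $p$, $n$.
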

\begin{proof}
By Theorem \ref{t1}, $D_{\varphi,\theta,\psi}$ is a continuous derivation. Now, suppose that $D_{\varphi,\theta,\psi}$ is inner. Therefore there exists $\left[\begin{array}{ccc}
   x & \alpha & \beta \\
     & y & \gamma \\
     &  & z \\
  \end{array}
\right]\in\T$ such that
\begin{eqnarray*}
% \nonumber to remove numbering (before each equation)
  D_{\varphi,\theta,\psi}\Big{(}\left[
  \begin{array}{ccc}
   a & m & p \\
     & b & n \\
     &  & c \\
  \end{array}
\right]\Big{)} &=&  \left[
  \begin{array}{ccc}
   a & m & p \\
     & b & n \\
     &  & c \\
  \end{array}
\right]\left[\begin{array}{ccc}
   x & \alpha & \beta \\
     & y & \gamma \\
     &  & z \\
  \end{array}
\right]-\left[\begin{array}{ccc}
   x & \alpha & \beta \\
     & y & \gamma \\
     &  & z \\
  \end{array}
\right]\left[
  \begin{array}{ccc}
   a & m & p \\
     & b & n \\
     &  & c \\
  \end{array}
\right]\\
   &=& \left[
  \begin{array}{ccc}
   ax & a\alpha+my & a\beta+\mu(m\otimes\gamma)+pz \\
     & by & b\gamma+nz \\
     &  & cz \\
  \end{array}
\right] \\
   && \hspace{-0.4cm} - \left[
  \begin{array}{ccc}
   xa & xm+\alpha b & xp+\mu(\alpha\otimes n)+\beta c \\
     & yb & yn+\gamma c \\
     &  & zc \\
  \end{array}
\right]\\
&=&\left[
  \begin{array}{ccc}
   ax-xa & a\alpha-\alpha b +my -xm& a\beta-\beta c+pz-xp \\
   && +\mu(m\otimes\gamma)-\mu(\alpha\otimes n)\\
     & by-yb & b\gamma-\gamma c+nz-yn \\
     &  & cz-zc \\
  \end{array}
\right]
\end{eqnarray*}

On the other hand $D_{\varphi,\theta,\psi}\Big{(}\left[
  \begin{array}{ccc}
   a & m & p \\
     & b & n \\
     &  & c \\
  \end{array}
\right]\Big{)}=\left[
  \begin{array}{ccc}
    0 & \varphi(m) & \theta(p) \\
     & 0 &\psi(n) \\
     &  & 0 \\
  \end{array}
\right]$. Thus, $ax-xa=0$, $by-yb=0$ and $cz-zc=0$. It follows that $x\in\mathcal{Z}(\A)$, $y\in\mathcal{\B}$ and $z\in\mathcal{Z}(C)$. Moreover, we have
\begin{enumerate}
  \item $\varphi(m)=a\alpha-\alpha b +my -xm$.
  \item $\theta(p)=a\beta-\beta c+pz-xp+\mu(m\otimes\gamma)-\mu(\alpha\otimes n)$.
  \item $\psi(n)=b\gamma-\gamma c+nz-yn$.
\end{enumerate}

Since $\varphi\in\emph{{\emph{Hom}}}_{\A,\B}(\M)$, $\theta\in\emph{{\emph{Hom}}}_{\A,\C}(\Pa)$ and $\psi\in\emph{{\emph{Hom}}}_{\B,\C}(\N)$, therefore we conclude that in (1) $a\alpha-\alpha b=0$, and $\varphi(m)=my -xm=\tau_\M^{x,y}(m)$; in (2) $\theta(p)=a\beta-\beta c+\mu(m\otimes\gamma)-\mu(\alpha\otimes n)=0$ and $\theta(p)=pz-xp=\tau_{\Pa}^{x,z}(p)$, and in (3) $b\gamma-\gamma c=0$ and $\psi(n)=nz-yn=\tau_\N^{y,z}(n)$.

Conversely, assume $\varphi=\tau_\M^{x,y}\in\mathcal{Z}\mathcal{R}_{\A,\B}(\M)$, $\theta=\tau_{\Pa}^{x,z}\in\mathcal{ZR}_{\A,\C}(\Pa)$ and $\psi=\tau_\N^{y,z}\in\mathcal{ZR}_{\B,\C}(\N)$. Define the inner derivation $D:\T\longrightarrow\T$ specified by $\left[\begin{array}{ccc}
   x & 0 & 0 \\
     & y & 0 \\
     &  & z \\
  \end{array}
\right]\in\T$. Then we have
\begin{eqnarray*}
% \nonumber to remove numbering (before each equation)
   D_{\left[\begin{array}{ccc}
   x & 0 & 0 \\
     & y & 0 \\
     &  & z \\
  \end{array}
\right]}\Big{(}\left[
  \begin{array}{ccc}
   a & m & p \\
     & b & n \\
     &  & c \\
  \end{array}
\right]\Big{)}&=& \left[
  \begin{array}{ccc}
   a & m & p \\
     & b & n \\
     &  & c \\
  \end{array}
\right]\left[\begin{array}{ccc}
   x & 0 & 0 \\
     & y & 0 \\
     &  & z \\
  \end{array}
\right]-\left[\begin{array}{ccc}
   x & 0 & 0 \\
     & y & 0 \\
     &  & z \\
  \end{array}
\right]\left[
  \begin{array}{ccc}
   a & m & p \\
     & b & n \\
     &  & c \\
  \end{array}
\right] \\
   &=& \left[\begin{array}{ccc}
   ax & my& pz \\
     & by & nz \\
     &  & cz \\
  \end{array}
\right]-\left[\begin{array}{ccc}
   xa & xm & xp \\
     & yb & yn \\
     &  & zc \\
  \end{array}
\right] \\
   &=& \left[
  \begin{array}{ccc}
    0 & \varphi(m) & \theta(p) \\
     & 0 &\psi(n) \\
     &  & 0 \\
  \end{array}
\right]= D_{\varphi,\theta,\psi}\Big{(}\left[
  \begin{array}{ccc}
   a & m & p \\
     & b & n \\
     &  & c \\
  \end{array}
\right]\Big{)}.
\end{eqnarray*}

This means that $D_{\varphi,\theta,\psi}$ is inner.
\end{proof}

Note that in the last part of above proof, we do not use condition (\ref{e1}). Now, we are ready to prove the main theorem of this paper.
\begin{thm}\label{t2}
Let $\T$ be a triangular Banach algebra of order three. If $\mathcal{H}^1(\A)=0$, $\mathcal{H}^1(\B)=0$ and $\mathcal{H}^1(\C)=0$, then
\begin{equation*}
    \mathcal{H}^1(\T)\cong \frac{{{\emph{Hom}}}_{\A,\B}(\M)\oplus_1{{\emph{Hom}}}_{\A,\C}(\Pa)\oplus_1{{\emph{Hom}}}_{\B,\C}(\N)}{\mathcal{Z}\mathcal{R}_{\A,\B}(\M) \oplus_1\mathcal{ZR}_{\A,\C}(\Pa)\oplus_1\mathcal{ZR}_{\B,\C}(\N)}.
\end{equation*}
\end{thm}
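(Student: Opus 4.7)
The plan is to build an explicit isomorphism $\Phi : \mathcal{H}^1(\T) \longrightarrow \bigl(\mathrm{Hom}_{\A,\B}(\M) \oplus_1 \mathrm{Hom}_{\A,\C}(\Pa) \oplus_1 \mathrm{Hom}_{\B,\C}(\N)\bigr) / \bigl(\mathcal{ZR}_{\A,\B}(\M) \oplus_1 \mathcal{ZR}_{\A,\C}(\Pa) \oplus_1 \mathcal{ZR}_{\B,\C}(\N)\bigr)$ by peeling off inner derivations until what remains has the shape of the derivation $D_{\varphi,\theta,\psi}$ produced by Lemma~\ref{l3}. Start with a continuous derivation $D:\T\to\T$ and invoke Corollary~\ref{c1} to record the associated diagonal derivations $(D_\A, D_\B, D_\C)$, the off-diagonal linear maps $(\tau_\M,\tau_\Pa,\tau_\N)$, and the off-diagonal elements $(m_D,p_D,n_D)$. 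The hypothesis $\mathcal{H}^1(\A) = \mathcal{H}^1(\B) = \mathcal{H}^1(\C) = 0$ means each of $D_\A, D_\B, D_\C$ is inner; fix elements $a_0 \in \A$, $b_0 \in \B$, $c_0 \in \C$ with $D_\A(a) = aa_0-a_0a$, $D_\B(b) = bb_0-b_0b$, $D_\C(c) = cc_0-c_0c$.

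Next, form the inner derivation $\Delta := \mathrm{ad}_T$ of $\T$ implemented by $T = \left[\begin{smallmatrix} a_0 & m_D & p_D \\ & b_0 & n_D \\ & & c_0 \end{smallmatrix}\right]$. A direct computation of $XT-TX$ (paralleling the one in the proof of Lemma~\ref{l3}) combined with the formula of Corollary~\ref{c1} shows that $D-\Delta$ kills the diagonal and sends $\left[\begin{smallmatrix} a & m & p \\ & b & n \\ & & c \end{smallmatrix}\right]$ to $\left[\begin{smallmatrix} 0 & \varphi(m) & \theta(p) \\ & 0 & \psi(n) \\ & & 0 \end{smallmatrix}\right]$, where $\varphi(m) := \tau_\M(m)+a_0m-mb_0$ and $\theta, \psi$ are the analogous expressions. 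Using parts (1)--(6) of Lemma~\ref{l1} together with the inner representations of $D_\A, D_\B, D_\C$, a short computation confirms that $\varphi \in \mathrm{Hom}_{\A,\B}(\M)$, $\theta \in \mathrm{Hom}_{\A,\C}(\Pa)$, $\psi \in \mathrm{Hom}_{\B,\C}(\N)$, and that these maps satisfy compatibility (\ref{e1}); the latter follows from the identity $\tau_\Pa(\mu(m \otimes n)) = \mu(\tau_\M(m) \otimes n) + \mu(m \otimes \tau_\N(n))$, which in turn is obtained by applying $D$ to the product of a pure-$m$ matrix with a pure-$n$ matrix and using the $\B$-balancing of $\mu$. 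Set $\Phi([D]) := [(\varphi,\theta,\psi)]$.

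It remains to verify that $\Phi$ is a well-defined isomorphism. If $D$ is itself inner, implemented by an element with diagonal $(x,y,z)$, then $D-\Delta$ is again inner, and the implementer analysis in the proof of Lemma~\ref{l3} forces $x,y,z$ to lie in the respective centers and the resulting $(\varphi,\theta,\psi)$ to be a central Rosenblum triple; hence $[(\varphi,\theta,\psi)]=0$ in the quotient. Injectivity is the converse: $\Phi([D])=0$ means $D-\Delta$ is a $D_{\varphi,\theta,\psi}$ with central-Rosenblum data, which by Lemma~\ref{l3} is inner, so $D$ is inner. Surjectivity is immediate from Lemma~\ref{l3}, which produces a continuous derivation $D_{\varphi,\theta,\psi}$ on $\T$ from any admissible triple and whose $\Phi$-image is $[(\varphi,\theta,\psi)]$.

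The main technical obstacle is tracking the non-uniqueness of the choice $(a_0,b_0,c_0)$: these are determined only modulo $\mathcal{Z}(\A)\times\mathcal{Z}(\B)\times\mathcal{Z}(\C)$, and one must check that a shift $a_0 \mapsto a_0 + z_\A$, $b_0 \mapsto b_0 + z_\B$, $c_0 \mapsto c_0 + z_\C$ by central elements modifies $(\varphi,\theta,\psi)$ exactly by the central Rosenblum triple $(\tau_\M^{z_\A,z_\B}, \tau_\Pa^{z_\A,z_\C}, \tau_\N^{z_\B,z_\C})$, so that $\Phi$ descends to the claimed quotient. Once this bookkeeping is done, the theorem assembles essentially from Corollary~\ref{c1} and Lemma~\ref{l3} with no further content beyond these routine checks.
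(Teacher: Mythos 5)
Your argument is correct and is essentially the paper's own proof: decompose $D$ via Corollary~\ref{c1}, use the vanishing of $\mathcal{H}^1(\A)$, $\mathcal{H}^1(\B)$, $\mathcal{H}^1(\C)$ to subtract the inner derivation implemented by $\left[\begin{smallmatrix} x & m_D & p_D \\ & y & n_D \\ & & z\end{smallmatrix}\right]$, and identify the remainder as a $D_{\varphi,\theta,\psi}$ whose innerness is characterized by Lemma~\ref{l3}. The only cosmetic difference is orientation: the paper defines $\Phi$ from the Hom-direct-sum onto $\mathcal{H}^1(\T)$ and invokes the first isomorphism theorem, so the well-definedness bookkeeping you perform for the choice of $(a_0,b_0,c_0)$ modulo centers is absorbed into the kernel computation rather than checked by hand.
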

\begin{proof}
Define $\Phi:\emph{\emph{Hom}}_{\A,\B}(\M)\oplus_1\emph{\emph{Hom}}_{\A,\C}(\Pa)\oplus_1\emph{\emph{Hom}}_{\B,\C}(\N)\longrightarrow \mathcal{H}^1(\T)$ by $\Phi(\varphi,\theta,\psi)=\overline{D}_{\varphi,\theta,\psi}$, where $\overline{D}_{\varphi,\theta,\psi}$ is the equivalence class of ${D}_{\varphi,\theta,\psi}$ in $\mathcal{H}^1(\T)$. It easy to show that $\Phi$ is linear. We have  to show that $\Phi$ is onto. To this end, let $D:\T\longrightarrow\T$ be a continuous derivation.  Then the Corollary \ref{c1} implies that there exist derivations $D_\A:\A\longrightarrow\A$, $D_\B:\B\longrightarrow\B$, $D_\C:\C\longrightarrow\C$, the linear mappings $\tau_\M:\M\longrightarrow\M$, $\tau_{\Pa}:\Pa\longrightarrow\Pa$, $\tau_\N:\N\longrightarrow\N$, and elements $m_D\in\M$, $p_D\in\Pa$ and $n_D\in\N$ such that
\begin{equation*}
  D\Big{(}\left[
  \begin{array}{ccc}
   a & m & p \\
     & b & n \\
     &  & c \\
  \end{array}
\right]\Big{)}=\left[
  \begin{array}{ccc}
    D_\A(a) & am_D-m_Db+\tau_\M(m) & ap_D-p_Dc+\mu(-m_D\otimes n)\\
    &&+\mu(m\otimes n_D)+\tau_{\Pa}(p) \\
     & D_\B(b) &bn_D-n_Dc+\tau_\N(n) \\
     &  & D_\C(c) \\
  \end{array}
\right],
\end{equation*}
and since $\mathcal{H}^1(\A)$, $\mathcal{H}^1(\B)$ and $\mathcal{H}^1(\C)$ are zero, so there are $x\in\A$, $y\in\B$ and $z\in\C$ such that $D_\A(a)=ax-xa=D_x(a)$, $D_\B(b)=by-yb=D_y(b)$ and $D_\C(c)=cz-zc=D_z(c)$. Define $D_0:\T\longrightarrow\T$ as follows
\begin{equation*}
  D_0\Big{(}\left[
  \begin{array}{ccc}
   a & m & p \\
     & b & n \\
     &  & c \\
  \end{array}
\right]\Big{)}=\left[
  \begin{array}{ccc}
    D_x(a) & am_D-m_Db+\tau_\M^{x,y}(m) & ap_D-p_Dc+\mu(-m_D\otimes n)\\
    &&+\mu(m\otimes n_D)+\tau_{\Pa}^{x,z}(p) \\
     & D_y(b) &bn_D-n_Dc+\tau_\N^{y,z}(n) \\
     &  & D_z(c) \\
  \end{array}
\right].
\end{equation*}

For every $\left[
  \begin{array}{ccc}
   a & m & p \\
     & b & n \\
     &  & c \\
  \end{array}
\right]\in\T$, we have
\begin{eqnarray*}
% \nonumber to remove numbering (before each equation)
   && \left[
  \begin{array}{ccc}
   a & m & p \\
     & b & n \\
     &  & c \\
  \end{array}
\right]\left[
  \begin{array}{ccc}
   x & m_D & p_D \\
     & y & n_D \\
     &  & z \\
  \end{array}
\right]-\left[
  \begin{array}{ccc}
   x & m_D & p_D \\
     & y & n_D \\
     &  & z \\
  \end{array}
\right]\left[
  \begin{array}{ccc}
   a & m & p \\
     & b & n \\
     &  & c \\
  \end{array}
\right] \\
   &=&  \left[
  \begin{array}{ccc}
   ax-xa & am_D-m_Db+my-xm & ap_D-p_Dc+\mu(m\otimes n_D)+\mu(-m_D\otimes n)+pz-xp \\
     & by-yb & bn_D-n_Dc+nz-yn \\
     &  & cz-zc \\
  \end{array}
\right].
\end{eqnarray*}

It follows that $D_0$ is a inner derivation specified by $\left[
  \begin{array}{ccc}
   x & m_D & p_D \\
     & y & n_D \\
     &  & z \\
  \end{array}
\right]$. Define $D_1=D-D_0$. Then $D_1$ is a derivation and we have
\begin{eqnarray*}
% \nonumber to remove numbering (before each equation)
  D_1\Big{(}\left[
  \begin{array}{ccc}
   a & m & p \\
     & b & n \\
     &  & c \\
  \end{array}
\right]\Big{)} &=& \left[
  \begin{array}{ccc}
    0 & \tau_\M(m)-\tau_\M^{x,y}(m) & \tau_{\Pa}(p)-\tau_{\Pa}^{x,z}(p) \\
     & 0 &\tau_\N(n)-\tau_\N^{y,z}(n) \\
     &  & 0 \\
  \end{array}
\right] \\
   &=& \left[
  \begin{array}{ccc}
    0 & \tau'_\M(m) & \tau'_{\Pa}(p) \\
     & 0 &\tau'_\N(n) \\
     &  & 0 \\
  \end{array}
\right],
\end{eqnarray*}
where $\tau_\M-\tau_\M^{x,y}=\tau'_\M$, $\tau_{\Pa}-\tau_{\Pa}^{x,z}=\tau'_{\Pa}$ and $\tau_\N-\tau_\N^{y,z}=\tau'_\N$. Clearly, by Lemma \ref{l2} and properties of $\tau_\M$, $\tau_{\Pa}$ and $\tau_\N$, we have $\tau'_\M\in\emph{\emph{Hom}}_{\A,\B}(\M)$, $\tau'_{\Pa}\in\emph{\emph{Hom}}_{\A,\C}(\Pa)$ and $\tau'_\N\in\emph{\emph{Hom}}_{\B,\C}(\N)$. Hence, $\overline{D}=\overline{D}_1=\Phi(\tau'_\M,\tau'_{\Pa},\tau'_\N)$. It means that $\Phi$ is onto. Therefore
\begin{equation*}
    \mathcal{H}^1(\T)\cong \frac{{{\emph{Hom}}}_{\A,\B}(\M)\oplus_1{{\emph{Hom}}}_{\A,\C}(\Pa)\oplus_1{{\emph{Hom}}}_{\B,\C}(\N)}{\ker\Phi}.
\end{equation*}

Suppose $(\varphi,\theta,\psi)\in\ker\Phi$. Then by Lemma \ref{l3},  $(\varphi,\theta,\psi)\in\ker\Phi$ if and only if $D_{\varphi,\theta,\psi}$ is inner. This shows that $\ker\Phi=\mathcal{Z}\mathcal{R}_{\A,\B}(\M) \oplus_1\mathcal{ZR}_{\A,\C}(\Pa)\oplus_1\mathcal{ZR}_{\B,\C}(\N)$.
\end{proof}
\begin{remark}\label{r1}
Note that the above obtained results  are true when we suppose that $\B$ and $\C$ have bounded approximate identity \emph{(}i.e. they are non-unital\emph{)}. Therefore by this notation, we can write all results for unital Banach algebra $\A$ and the non-untal Banach algebras $\B$ and $\C$ with bounded approximate identity.
\end{remark}
\begin{ex}
Suppose that $\A=\B=\C=\mathbb{C}$, \emph{(}$\mathbb{C}$ is the space of complex number\emph{)}, and $\M$, $\Pa$ and $\N$ are arbitrary Banach spaces. Then, it is known that every derivation from $\mathbb{C}$ on itself is inner. Thus, $\mathcal{H}^1(\A)$, $\mathcal{H}^1(\B)$ and $\mathcal{H}^1(\C)$ are zero. By $\mathcal{L}(\M)$, we mean the space of all linear and bounded operator from $\M$ into $\M$ \emph{(}$\mathcal{L}(\Pa)$ and $\mathcal{L}(\N)$ have similar definition\emph{)}. Then by an  easy argument, we can conclude that ${\emph{Hom}}_{\mathbb{C}}(\M)=\mathcal{L}(\M)$ and $\mathcal{Z}\mathcal{R}_{\mathbb{C}}(\M)=\{\lambda \emph{id}_\M:\lambda\in\mathbb{C}\}$, where $\emph{id}_\M$ is the identity map on $\M$. Similarly, one can  obtain same results for $\Pa$ and $\N$. Then by Theorem \ref{t2}, we have
\begin{equation*}
    \mathcal{H}^1(\T)\cong \frac{\mathcal{L}(\M)\oplus_1\mathcal{L}(\Pa)\oplus_1\mathcal{L}(\N)}{\{\lambda \emph{id}_\M:\lambda\in\mathbb{C}\}\oplus_1\{\lambda \emph{id}_{\Pa}:\lambda\in\mathbb{C}\}\oplus_1\{\lambda \emph{id}_{\N}:\lambda\in\mathbb{C}\}}.
\end{equation*}
\end{ex}
\begin{ex}
Let $G$, $H$ and $K$ be locally compact groups, and $\A=M(G)$, $\B=M(H)$ and $\C=M(K)$ be measure algebras on$G$, $H$ and $K$, respectively. By Theorem 5.6.34 \emph{(iii)} of \cite{da}, $$\mathcal{H}^1(M(G))=\mathcal{H}^1(L^1(G),M(G))$$ and by Corollary 1.4 of \cite{lo}, we have $$\mathcal{H}^1(L^1(G),M(G))=0.$$  By the same reasoning  one can obtain that $\mathcal{H}^1(M(H))=0$ and $\mathcal{H}^1(M(K))=0$. Let $G_1=G\cap H\neq\emptyset$, $G_2=G\cap K\neq\emptyset$ and $G_3=H\cap K\neq\emptyset$. Now, consider the triangular Banach algebra
$$\T=\left[
  \begin{array}{ccc}
   M(G) & L^1(G_1) & L^1(G_2) \\
     & M(H) &L^1(G_3) \\
     &  & M(K) \\
  \end{array}
\right].$$

 Then by Theorem \ref{t2}, we have
 \begin{equation*}
    \mathcal{H}^1(\T)\cong \frac{{{\emph{Hom}}}_{M(G),M(H)}(L^1(G_1))\oplus_1{{\emph{Hom}}}_{M(G),M(K)}(L^1(G_3))\oplus_1{{\emph{Hom}}}_{M(H),M(K)}(L^1(G_2))}{\mathcal{Z}\mathcal{R}_{M(G),M(H)}(L^1(G_1)) \oplus_1\mathcal{ZR}_{M(G),M(K)}(L^1(G_3))\oplus_1\mathcal{ZR}_{M(H),M(K)}(L^1(G_2))}.
\end{equation*}
\end{ex}
\begin{ex}
Let $G$ be a locally compact group and $M(G)$ and $L^1(G)$ be  group algebras on $G$. Consider
$$\T=\left[
  \begin{array}{ccc}
   M(G) & L^1(G) & L^1(G) \\
     & M(G) &L^1(G) \\
     &  & M(G) \\
  \end{array}
\right].$$

Then
\begin{equation*}
    \mathcal{H}^1(\T)\cong \frac{{{\emph{Hom}}}_{M(G)}(L^1(G))\oplus_1{{\emph{Hom}}}_{M(G)}(L^1(G))\oplus_1{{\emph{Hom}}}_{M(G)}(L^1(G))}{\mathcal{Z}\mathcal{R}_{M(G)}(L^1(G)) \oplus_1\mathcal{ZR}_{M(G)}(L^1(G))\oplus_1\mathcal{ZR}_{M(G)}(L^1(G))}.
\end{equation*}
\end{ex}
\begin{ex}
Let $G$ be abelian locally compact group, and
$$\T=\left[
  \begin{array}{ccc}
   \ell^1(G) & L^1(G) & L^1(G) \\
     & L^1(G) &L^1(G) \\
     &  & L^1(G) \\
  \end{array}
\right].$$

Johnson's theorem implies that $\ell^1(G)$ and $L^1(G)$ are commutative amenable Banach algebras, so $$\mathcal{H}^1(\ell^1(G))=0=\mathcal{H}^1(L^1(G)).$$  Wendel's theorem \emph{(}\cite[Theorem 1]{we}\emph{)} implies that
\begin{equation*}
    {{\emph{Hom}}}_{\ell^1(G),L^1(G)}(L^1(G))=M(G)~\emph{and}~ {{\emph{Hom}}}_{L^1(G),L^1(G)}(L^1(G))=M(G).
\end{equation*}

As well as,
\begin{equation*}
    \mathcal{Z}\mathcal{R}_{\ell^1(G),L^1(G)}(L^1(G))=\ell^1(G)\oplus L^1(G) ~\emph{and}~\mathcal{ZR}_{L^1(G),L^1(G)}(L^1(G))=L^1(G)\oplus L^1(G).
\end{equation*}

Therefore by Theorem \ref{t2} and Remark \ref{r1}, we have
\begin{equation*}
    \mathcal{H}^1(\T)\cong \frac{M(G)\oplus_1M(G)\oplus_1M(G)}{(\ell^1(G)\oplus L^1(G)) \oplus_1(\ell^1(G)\oplus L^1(G))\oplus_1(L^1(G)\oplus L^1(G))}.
\end{equation*}
\end{ex}
%------------------------------------------------------------------------------------%

%-----------------------------------------------------------------------------

\end{document}